\journalname{Journal}
\renewcommand{\leq}{\leqslant}
\renewcommand{\geq}{\geqslant}
\DeclareMathOperator{\Ai}{{\operator@font Ai}}
\DeclareMathOperator{\cl}{{\operator@font cl}}
\let \Re \relax
\DeclareMathOperator{\Re}{Re}
\let \Im \relax
\DeclareMathOperator{\Im}{Im}
\let \limsup \relax
\DeclareMathOperator*{\limsup}{\smash[b]{\operator@font \overline{\lim}}}
\newcommand{\C}{{\mathbb  C}}
\newcommand{\R}{{\mathbb  R}}
\newcommand{\Z}{{\mathbb  Z}}
\newcommand{\N}{{\mathbb  N}}
\begin{document}

\title{A Jentzsch-Theorem for Kapteyn, Neumann, and General Dirichlet Series
}

\titlerunning{A Jentzsch-Theorem for Kapteyn, Neumann, and General Dirichlet Series}        

\author{Folkmar Bornemann}


\institute{Folkmar Bornemann \at
              Department of Mathematics\\ 
              Technical University of Munich \\
              \email{bornemann@tum.de}}

\date{}

\maketitle

\begin{abstract}
Comparing phase plots of truncated series solutions of Kepler's equation by Lagrange's power series with those by Bessel's Kapteyn series strongly suggest that a Jentzsch-type theorem holds true not only for the former but also for the latter series: each point of the boundary of the domain of convergence in the complex plane is a cluster point of zeros of sections of the series. We prove this result by studying properties of the growth function of a sequence of entire functions. For series, this growth function is computable in terms of the convergence abscissa of an associated general Dirichlet series. The proof then extends, besides including Jentzsch's classical result for power series, to general Dirichlet series, to Kapteyn, and to Neumann series of Bessel functions. Moreover, sections of Kapteyn and Neumann series generally exhibit zeros close to the real axis which can be explained, including their asymptotic linear density, by the theory of the distribution of zeros of entire functions. 
\keywords{Jentzsch's theorem \and general Dirichlet series \and Kapteyn series \and Neumann series \and holonomic entire functions}
\subclass{30B50  \and 30C15 \and 30D20}
\end{abstract}

\section{Introduction}\label{sec:intro}

The story of this paper starts, as quite a few in the history of mathematics \cite{MR1268639}, with Kepler's equation of 1609, namely
\[
M = E -\epsilon \cdot \sin E,
\]
where $M$ is the mean anomaly and $E$ the eccentric anomaly of a celestial body on an elliptic orbit of eccentricity $\epsilon $, such as a planet or a recurrent comet in the solar system. By solving for $E$, given $M$ and $\epsilon $, one would find the position of the body given the mean time of observation. Lagrange, by using his method of series inversion, obtained in 1771 a power series expansion in $\epsilon $, namely
\begin{equation}\label{eq:lagrange}
E-M = \sum_{n=1}^\infty \frac{\epsilon^n}{2^{n-1}n!}\sum_{0\leq k \leq n/2} (-1)^k\binom{n}{k}(n-2k)^{n-1} \sin((n-2k)M),
\end{equation}
which generally converges for all real $M$ only if $0\leq \epsilon < 0.66274\cdots$,\footnote{A singularity analysis shows that the convergence radius $\rho$ for fixed $M$ is the smallest value of $|z_*|$ such that
$(z_*,w_*)$ satisfy the equations $z_* \cos w_* = 1$, $w_* = M + z_*\sin w_*$. This radius becomes minimal for $M=\pi/2$, which then gives Laplace's limit value, see \cite[§231]{MR0060009}. For other $M$ it can be larger, e.g., for $M=1/5$ as in Fig.~\ref{fig:kepler} we get $\rho=0.84889\cdots$.}
a restriction obtained by Laplace in 1799 and justified by Cauchy in 1829. By expanding into a Fourier series with respect to $M$, Bessel found in 1818 that
\begin{equation}\label{eq:bessel}
E-M = 2 \sum_{n=1}^\infty \frac{\sin(n M)}{n} J_n(n \epsilon),
\end{equation}
where $J_n(z)$ denotes the Bessel function of the first kind of order $n$. This series converges for all $0\leq \epsilon < 1$ of interest. Series of such a form with respect to $\epsilon$ are called {\em Kapteyn series} due to the fact that they were first systematically studied, as function of a complex variable $\epsilon$, by Kapteyn in 1893, cf. \cite[p.~551]{MR0010746}. 

\begin{figure}[tbp]
\includegraphics[width=0.475\textwidth]{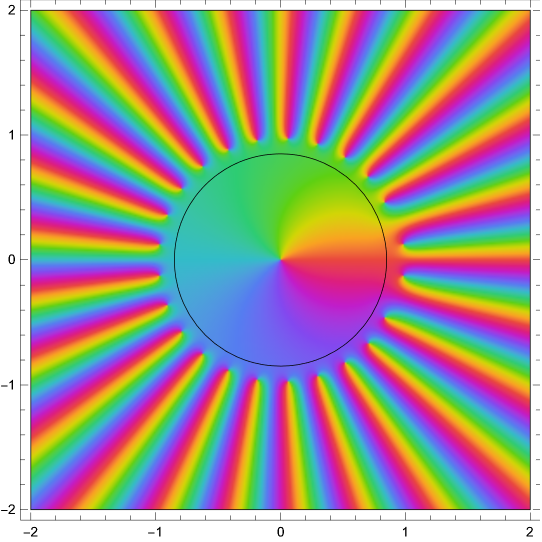}\hfil
\includegraphics[width=0.475\textwidth]{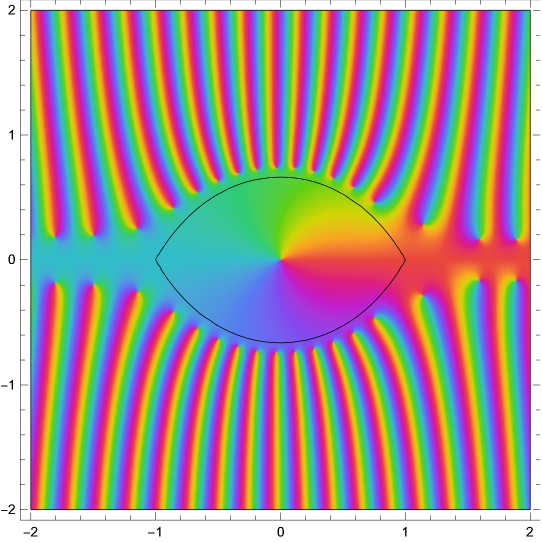}
\caption{Phase plots for complex eccentricity $\epsilon $ of the series solution of Kepler's equation for $M=1/5$ truncated at $n=25$; left: power series \eqref{eq:lagrange} with $\rho=0.84889\cdots$, right: Kapteyn series \eqref{eq:bessel} with $\rho=1$; in both cases the boundary of the domain of convergence is shown as the solid black line, cf. Fig.~\ref{fig:Drho} for the domains of convergence of Kapteyn series in general}
\label{fig:kepler}
\end{figure}

Now, during a lecture on complex analysis given by the present author the question came up whether one could see the different convergence behavior in visualizing the truncated series (called from now on {\em sections}) as functions of a complex variable $\epsilon$, e.g., by using phase plots \cite{MR3024399}. For instance, taking the mean anomaly $M=1/5$ and truncating both series at $n=25$ yields the phase plots shown in Fig.~\ref{fig:kepler}.

Strikingly, and more densely so for increasingly larger indices of truncation, zeros of the sections appear to cluster at the boundary of the domain of convergence in {\em both} cases. Whereas this is a classical theorem for power series obtained by the mathematician-poet Jentzsch in his 1914 thesis \cite{MR1555151}, the present author was not able to find any mention of such a phenomenon for Kapteyn series in the literature.\footnote{The manifold generalizations of Jentzsch's theorem are always addressing families of (exponential) polynomials, but no other families of entire transcendental functions.}  Fig.~\ref{fig:kapteyn} shows, for some random coefficients, that this phenomenon persists for different domains of convergence of Kapteyn series, even unbounded ones. 

In this paper we will prove a Jentzsch-type theorem, namely Theorem~\ref{thm:jentzsch_series} in Sect.~\ref{sec:series}, for general Kapteyn series and we will also explain, in Sect.~\ref{sec:entire}, the appearance of infinitely many zeros of sections close to the real axis---even if the domains of convergence are bounded, see the right panel in Fig.~\ref{fig:kepler} and the left one in Fig.~\ref{fig:kapteyn}. Moreover, as Fig.~\ref{fig:neumann} illustrates, similar phenomena can be observed for Neumann series of Bessel functions \cite[Chap.~XVI]{MR0010746}, that is, series of the form
\begin{equation}\label{eq:neumann}
\sum_{n=0}^\infty a_n J_n(z).
\end{equation}
Also for these series a Jentzsch-type theorem appears to be new and is therefore included in the discussion of Sect.~\ref{sec:series} and Sect.~\ref{sec:entire}.

\begin{figure}[tbp]
\includegraphics[width=0.475\textwidth]{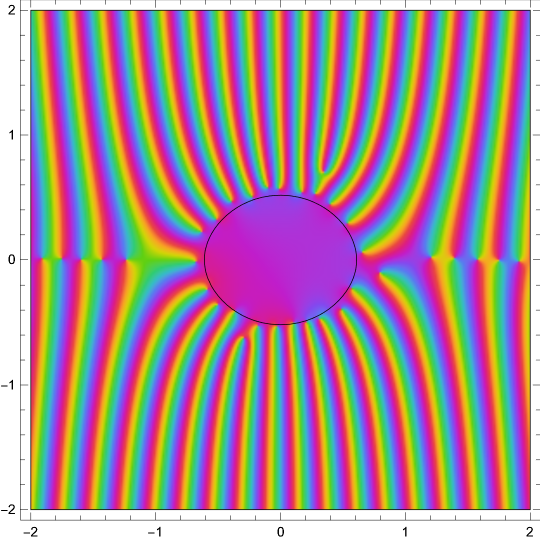}\hfil
\includegraphics[width=0.475\textwidth]{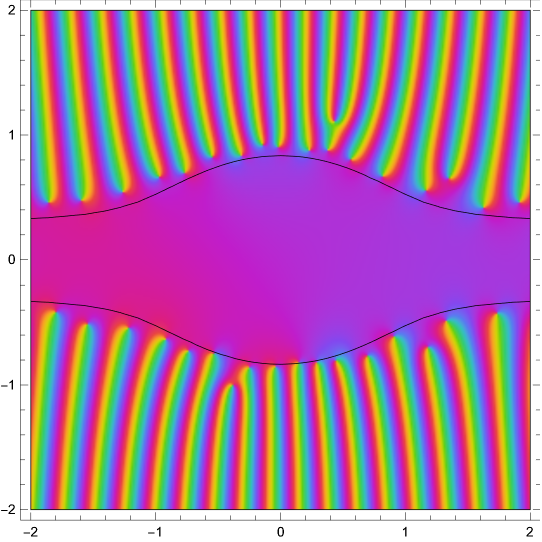}
\caption{Kapteyn series truncated at $n=25$; left: convergence level $\rho = 3/4$, right: $\rho=4/3$ }
\label{fig:kapteyn}
\end{figure}

The key to our proof of Jentzsch-type theorems for Kapteyn and Neumann series will be the following observation by Ganelius \cite[p. 33]{MR62826}:
\begin{quote}
From the original proof of {\sc Jentzsch's} theorem on the clustering of the zeros of the polynomial sections of a power series it is seen that the essential fact is not that the circle of convergence is the boundary of the domain of uniform convergence. It is the behaviour of the [growth] function
\[
\mu(z) = \limsup_{n\to \infty} |f_n(z)|^{1/n}
\]
which is important. [Here, $f_n(z)$ denotes the section of the series obtained from truncation at $n$.]
\end{quote}
This growth function is also used by Luh in \cite{MR2419473} in proving a generalization of Jentzsch's theorem to a wealth of other values than zeros. Now, if we think of $f_n(z)$ for a fixed $z$ as sections of a general series and replace the exponents $1/n$ by $1/\lambda_n$ with monotonically increasing $\lambda_n\to\infty$, we are led to consider 
\[
\log \limsup_{n\to\infty} \left|\sum_{k=0}^n a_k\right|^{1/\lambda_n} = \limsup_{n\to\infty}\lambda_n^{-1}\log\left|\sum_{k=0}^n a_k\right|,
\]
a quantity which, by a classical result of Cahen from 1894, equals $\max(0,\sigma_c)$, regardless whether the series converges or not, unless $\sum_{n=0}^\infty a_n =0$. Here $\sigma_c$ denotes the convergence abscissa of the associated general Dirichlet series
\begin{equation}\label{eq:genDir}
\sum_{n=0}^\infty a_n e^{-\lambda_n z}.
\end{equation}
It is therefore natural to include general Dirichlet series into our study of Jentzsch-type theorems. Interestingly, it was already noted by Jentzsch \cite[p.~236]{MR1555151}, and attributed by him to Knopp, that his theorem extends to sections of ordinary Dirichlet series,\footnote{Refinements of this result for the Riemann zeta function and its connections to Riemann's hypothesis have been studied by Turán \cite{MR27305}: he proved, among other things, that if the sections $f_n(z)$ do not vanish in the half-plane $\Re z>1$ for $n> n_0$, then Riemann's hypothesis is true.} that is, to the particular choice $\lambda_n = \log n$.

\begin{figure}[tbp]
\includegraphics[width=0.9125\textwidth]{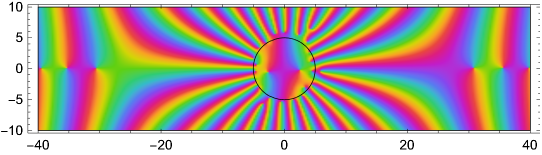}\\
\includegraphics[width=0.9125\textwidth]{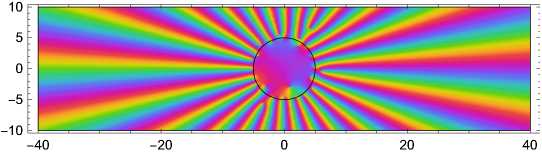}
\caption{Two series with convergence radius $\rho=5$ truncated at $n=25$; top: Neumann series, bottom: its associated power series, cf. \cite[§16.2]{MR0010746} }
\label{fig:neumann}
\end{figure}

{\em Outline of the paper.} In Sect.~\ref{sec:growth} we will introduce the growth function $\mu$ of a sequence of entire functions and derive a proto-Jentzsch-type theorem, namely Theorem~\ref{thm:proto_Jentzsch}, to hold for $\partial D \cap \cl\{\mu>1\}$,\footnote{We write $\cl A$ for the closure of a set $A\subset\C$ and concisely $\{\mu > 1\} := \{z:\mu(z)>1\}$, etc.} where $D$ is a domain of compact convergence of the sequence. We will show that this set is already all of $\partial D$ for certain parametrized domains of convergence. In Sect.~\ref{sec:series} we will apply these results to sections of the four types of series studied in this paper: general Dirichlet series, power series, Neumann series, and Kapteyn series. Finally, in Sect.~\ref{sec:entire} we will use concepts from the theory of distribution of zeros of entire functions, namely functions of completely regular growth, to explain the infinitude of zeros of sections of Kapteyn and Neumann series close to the real axis---including a prediction of their asymptotic linear density.

\vspace*{-0.4mm}

\section{Growth functions and Jentzsch-type theorems}\label{sec:growth}

In this section we will show that zeros of a sequence of entire functions $(f_n)$ cluster at those boundary points of an open set of convergence for which the sequence exhibits sufficient growth nearby. 

\begin{definition}\label{def:h}
The sequence $(f_n)$ allows an {\em admissible sequence of 
 growth exponents} $\lambda_1 < \lambda_2 < \ldots < \lambda_n \to \infty$ if for all $K\Subset \C$ (denoting $K\subset\C$ compact)
\[
\limsup_{n\to\infty} \max_{z\in K} |f_n(z)|^{1/\lambda_n} \leq c(K) < \infty.
\]
In particular, this admissibility implies that the corresponding {\em growth function}
\[
\mu(z):= \limsup_{n\to\infty} |f_n(z)|^{1/\lambda_n} 
\]
of $(f_n)$ is well defined. 
\end{definition}

We observe that $\mu(z)\leq 1$ if $f_n(z)$ converges, with equality if the limit is non-zero. Thus any open set $D$ where $(f_n)$ converges pointwise must satisfy
\begin{equation}\label{eq:cond_dom_conv}
D \cap \cl \{\mu>1\} =\emptyset.
\end{equation}

\begin{theorem}\label{thm:proto_Jentzsch} Let $(f_n)$ be a sequence of entire functions with an admissible growth function~$\mu$. If $(f_n)$ converges compactly to a non-constant holomorphic function $f$ on some open set $D$, then all points of the set
\[
\partial D \cap \cl \{\mu>1\}
\]
are cluster points of zeros of the $f_n$. 
\end{theorem}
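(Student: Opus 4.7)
The plan is to argue by contradiction. Suppose there is a point $z_0 \in \partial D \cap \cl\{\mu > 1\}$ that is not a cluster point of zeros of $(f_n)$. Then I can choose $r > 0$ and $N \in \N$ such that every $f_n$ with $n \geq N$ is non-vanishing on the open disk $B := \{z : |z-z_0| < r\}$. On the simply connected $B$, I would pick holomorphic branches $g_n$ satisfying $g_n^{\lambda_n} = f_n$, so that $|g_n| = |f_n|^{1/\lambda_n}$. Admissibility gives local uniform boundedness of $\{g_n\}_{n\geq N}$ on $B$, and hence by Montel's theorem this family is normal.

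Next I would exploit the assumption $z_0 \in \partial D$: the open set $B \cap D$ is non-empty, and on it $f_n \to f$ compactly with $f$ non-constant holomorphic. Hence $f$ has only isolated zeros in $B \cap D$, and at every point $z$ where $f(z) \neq 0$ one has $|g_n(z)| = |f_n(z)|^{1/\lambda_n} \to 1$. Let $g_{n_k} \to g$ be any locally uniformly convergent subsequence on $B$. Then $|g| = 1$ on a dense open subset of $B \cap D$, hence $|g| \equiv 1$ on $B \cap D$ by continuity. But $g(B \cap D) \subset \{|w| = 1\}$ has empty interior in $\C$, so the open mapping theorem forces $g$ to be constant on $B$, necessarily of modulus one. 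Consequently $|g_{n_k}| \to 1$ locally uniformly on $B$.

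Since every subsequence of $(g_n)_{n\geq N}$ has, by normality, a further locally uniformly convergent one, the conclusion above applies to each such subsequence and therefore $|g_n| \to 1$, equivalently $|f_n|^{1/\lambda_n} \to 1$, locally uniformly on $B$. This yields $\mu \equiv 1$ on $B$, contradicting $z_0 \in \cl\{\mu > 1\}$: every neighborhood of $z_0$, and in particular $B$ itself, must contain points where $\mu > 1$.

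The main difficulty I anticipate lies in the second paragraph: squeezing from the compact convergence $f_n \to f$ on the possibly small piece $B \cap D$ the much stronger conclusion that any subsequential limit $g$ is a constant of modulus one on the \emph{whole} disk $B$. It is this propagation, via the open mapping theorem (or equivalently the maximum modulus principle), that ultimately trivializes $\mu$ on $B$ and produces the desired contradiction with the growth hypothesis near $z_0$.
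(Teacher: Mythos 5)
Your proposal is correct and follows essentially the same route as the paper: contradiction, holomorphic $\lambda_n$-th roots on the zero-free disk, Montel, modulus one of the limit on $B\cap D$ forced by compact convergence of $f_n\to f$, constancy on all of $B$ via the open mapping theorem, and a contradiction with $\mu>1$ near $z_0$. The only cosmetic difference is that the paper fixes a single point $w$ with $\mu(w)>1$ and a subsequence realizing the $\limsup$ there, whereas you run the sub-subsequence argument to get $|f_n|^{1/\lambda_n}\to 1$ on all of $B$; both variants close the argument equally well.
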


\begin{proof}
Let us assume to the contrary that there is an $z_0 \in \partial D \cap \cl \{\mu>1\}$, some $n_0>0$ and an open disk $U$ centered at $z_0$ such that $f_n(z)\neq 0$ for all $z\in U$ and $n\geq n_0$. Then there is some $w \in U$ with $\mu(w)>1$, so that, after extracting a subsequence $n'$,
\begin{equation}\label{eq:limitw}
\lim_{n'\to\infty}|f_{n'}(w)|^{1/\lambda_{n'}} = \mu(w) > 1.
\end{equation}
Since the $f_{n'}$ are nowhere zero on the simply connected $U$, there is a choice of a univalent branch
\[ 
g_{n'}:=f_{n'}^{1/\lambda_n} 
\]
that is holomorphic in $U$. By admissibility of the growth exponents $\lambda_n$, there is some $n_0'>0$ such that with $K=\cl(U)$
\[
\max_{z\in U} |g_{n'}(z)| =\max_{z\in U} |f_{n'}(z)|^{1/\lambda_{n'}} \leq c(K) +1  \qquad (n'\geq n_0').
\]
According to Montel's theorem we can extract another subsequence $n''$ such that
$g_{n''}$ converges compactly to some holomorphic function $g$ in $U$.

If we restrict this limit to $D$, where $f_n\to f$ compactly, we get
\[
|g(z)| = \lim_{n''\to \infty} |g_{n''}(z)| =  \lim_{n''\to \infty}|f_{n''}(z)|^{1/\lambda_{n''}} = 1 \qquad (z\in U\cap D \neq \emptyset)
\]
since, according to Hurwitz's theorem, $f$ is nowhere zero in $U\cap D$. Thus, by the local mapping principle, $g$ must be locally constant with $|g|=1$ in $U\cap D$ and hence, by the identity theorem, constant on all of $U$. This, however, yields 
\[
\lim_{n''\to \infty}|f_{n''}(w)|^{1/\lambda_{n''}} = \lim_{n''\to \infty} |g_{n''}(w)| = |g(w)| = 1,
\]
which contradicts the choice of $w\in U$ we started with in \eqref{eq:limitw}. \qed
\end{proof}

To parametrize open sets of convergence we consider sublevel sets.

\begin{definition} A continuous function $\omega : \C \to [0,\infty)$ is called a {\em proper height function} if 
the open sublevel sets $D_r:= \{\omega < r\}$ are nonempty for all $r >0$ and if there are no local maxima of positive height, that is, equivalently, if
\[
 \cl \{\omega > r\} = \{\omega \geq r\} \qquad (r> 0).
\]
The {\em convergence level} of the sequence $(f_n)$ is given as
\[
\rho := \sup \{ r>0: \text{$f_n$ converges compactly on $D_r$}\},
\]
where $\rho=0$ if there is no such $D_r$ at all and $\rho=\infty$ if there is convergence in all of the $D_r$.
\end{definition}

Given a proper height function we observe the filtration property $\cl D_r \subset D_{r'}$ for $0<r<r'$, implying
that the sup defining a convergence level $0<\rho<\infty$ is indeed a max and there will be a largest level $\rho$ for which $f_n$ converges compactly on $D_\rho$. In fact, if $0<\rho<\infty$, this open set $D_\rho$ will be the maximal open set of convergence and enjoys a Jentzsch-type theorem on all of its boundary:

\begin{theorem}\label{thm:Jentzsch} Let $(f_n)$ be a sequence of entire functions that has convergence level $0< \rho <\infty$ for a proper height function $\omega$ such that the limit in $D_\rho$ is non-constant. Let there be an admissible growth function $\mu$ that satisfies
\[
\{\omega > \rho\} \subset \{ \mu > 1\} \cup \{\omega = r\},
\]
for some $r>0$. Then all points of the set $\partial D_\rho$ are cluster points of zeros of the $f_n$ and $D_\rho$ is the maximal open set in which $f_n$ converges pointwise. 
\end{theorem}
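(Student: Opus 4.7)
The plan is to reduce the theorem to the proto-version, Theorem~\ref{thm:proto_Jentzsch}, by verifying the key inclusion $\partial D_\rho \subset \cl\{\mu > 1\}$. Once this is in hand, the cluster-point conclusion for the entire boundary is immediate from the proto-theorem, and the maximality of $D_\rho$ follows from the initial observation \eqref{eq:cond_dom_conv}.

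First I would identify the boundary: $\partial D_\rho = \{\omega = \rho\}$. Continuity of $\omega$ gives $\omega \leq \rho$ on $\partial D_\rho$, and any point with $\omega < \rho$ lies in the open set $D_\rho$ itself, not on its boundary; conversely, a point with $\omega = \rho$ lies in $\cl\{\omega > \rho\}$ by the proper-height condition (no local maxima of positive height) and so cannot be interior to $D_\rho$.

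Next I would show that every $z_0 \in \partial D_\rho$ lies in $\cl\{\mu > 1\}$. By the proper-height property just invoked, there is a sequence $z_k \to z_0$ with $\omega(z_k) > \rho$. The hypothesis places each $z_k$ in $\{\mu > 1\} \cup \{\omega = r\}$. If infinitely many $z_k$ lay on the level $\{\omega = r\}$, then passing to that subsequence we would have $\omega(z_k) = r$ for all $k$; continuity of $\omega$ forces $r = \rho$, contradicting $\omega(z_k) > \rho$. Hence infinitely many $z_k$ lie in $\{\mu > 1\}$, giving $z_0 \in \cl\{\mu > 1\}$. Since $(f_n)$ converges compactly on $D_\rho$ to a non-constant function, Theorem~\ref{thm:proto_Jentzsch} applied to $D = D_\rho$ yields that every point of $\partial D_\rho = \partial D_\rho \cap \cl\{\mu > 1\}$ is a cluster point of zeros of the $f_n$.

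For maximality, let $E$ be any open set on which $(f_n)$ converges pointwise. Then $\mu \leq 1$ on $E$ by \eqref{eq:cond_dom_conv}, and since $E$ is open this strengthens to $E \cap \cl\{\mu > 1\} = \emptyset$. Suppose, to the contrary, that some $z \in E$ satisfies $\omega(z) \geq \rho$. If $\omega(z) = \rho$, the previous paragraph already gives $z \in \cl\{\mu > 1\}$, contradicting $E \cap \cl\{\mu > 1\} = \emptyset$. If $\omega(z) > \rho$ and $z \in \{\mu > 1\}$ directly, contradiction. The remaining case is $z \in \{\omega = r\}$ with $r = \omega(z) > \rho$: the proper-height condition places $z \in \cl\{\omega > r\}$, and the hypothesis together with $\{\omega > r\} \cap \{\omega = r\} = \emptyset$ yields $\{\omega > r\} \subset \{\mu > 1\}$, so $z \in \cl\{\mu > 1\}$—again a contradiction. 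Hence $E \subset D_\rho$. The main obstacle throughout is the exceptional level set $\{\omega = r\}$ in the hypothesis: the argument has to rule out that a boundary point is ``hidden'' inside this level away from $\{\mu > 1\}$, which is precisely where continuity of $\omega$ combined with the proper-height condition does the essential work.
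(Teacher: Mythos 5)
Your proof is correct and follows essentially the same route as the paper: both reduce to Theorem~\ref{thm:proto_Jentzsch} by establishing $\{\omega\geq\rho\}\subset\cl\{\mu>1\}$, the paper via the chain of closure inclusions \eqref{eq:cor1_step} and you via sequences, with the same device for excising the exceptional level $\{\omega=r\}$ (approach a point of level $\rho$ through levels strictly between $\rho$ and $r$, and a point of level $r>\rho$ through levels above $r$). One side remark should be struck or weakened to an inclusion: the identification $\partial D_\rho=\{\omega=\rho\}$ is false in general, since a proper height function may have local \emph{minima} at positive height --- indeed, for the Kapteyn height function $\Omega$ with $\rho=1$ the paper notes $\{\Omega=1\}=\partial D_1\cup S$, where the real rays $S$ are mostly not boundary points of the bounded eye-shaped domain $D_1$ --- but this does no harm, because your sequence argument only uses, and actually proves, the inclusions $\partial D_\rho\subset\{\omega=\rho\}\subset\cl\{\mu>1\}$, which is exactly what both the clustering step and the $\omega(z)=\rho$ case of the maximality step require.
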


\begin{proof} The clustering of zeros at $\partial D_\rho$ follows immediately from Theorem~\ref{thm:proto_Jentzsch} by observing, for $L:=\{\omega = r\}$,
\begin{equation}\label{eq:cor1_step}
\cl \{\mu > 1\} \supset \cl\left(\{\omega > \rho\}\setminus L\right) \supset \{\omega \geq \rho\} \supset \{\omega= \rho\} \supset\partial D_\rho,
\end{equation}
so that $\partial D_\rho = \partial D_\rho \cap \cl \{\mu > 1\}$. The second inclusion is because for $r'\geq \rho$
\[
\{\omega = r'\} \subset 
\begin{cases} 
\cl(\{\omega > r'\}) = \cl\left(\{\omega > r'\}\setminus L \right) \subset \cl\left(\{\omega > \rho\}\setminus L \right) & \text{\!\!if $r' \geq r$,}\\*[2mm]
\cl(\{r > \omega > r'\}) \subset \cl\left(\{\omega > \rho\}\setminus L\right)& \text{\!\!otherwise},
\end{cases}
\]
and then taking the union over all $r'\geq\rho$.
If $D$ is any open set of pointwise convergence, then by \eqref{eq:cond_dom_conv} and \eqref{eq:cor1_step}
\[
\emptyset = D \cap \cl \{\mu > 1\} \supset D \cap \{\omega \geq \rho\},
\]
which gives $D \subset \{\omega < \rho\} = D_\rho$, so that $D_\rho$ is the maximal such open set.\qed
\end{proof}

\section{A Jentzsch-type theorem for four types of series}\label{sec:series}

We will apply the Jentzsch-type result Theorem~\ref{thm:Jentzsch} to series of the form
\begin{equation}\label{eq:series}
\sum_{n=0}^\infty a_n\phi_n(z),
\end{equation}
where $a_n$ are complex coefficients and $\phi_n$ are given entire functions. The sections of the series will be denoted by
\begin{equation}\label{eq:section}
f_n(z):=\sum_{k=0}^n a_k \phi_k(z)\qquad (n\in \N_0).
\end{equation}
Specifically, we will study the following four choices: 
\begin{itemize}
\item {\em General Dirichlet series}:\; $\phi_n(z)= e^{-\lambda_n z}$ with exponents $\lambda_n\in\R$ satisfying
\[
\lambda_0 < \lambda_1 < \lambda_2 < \cdots < \lambda_n \to \infty \qquad (n\to\infty).
\]
Here we restrict ourselves to series with an abscissa $\sigma_a < \infty$ of absolute convergence. Recall that \cite[Thm.~9]{MR0185094}
\[
\sigma_a \leq  \sigma_c + \limsup_{n\to \infty} \frac{\log n}{\lambda_n},
\]
where $\sigma_c$ denotes the abscissa of convergence. Ordinary Dirichlet series, i.e.,
series of the form
\[
\sum_{n=1}^\infty a_n n^{-z}
\]
correspond to the particular choices $a_0=0$, $\lambda_n = \log n$, that is, $\phi_n(z) = n^{-z}$. For ordinary Dirichlet series we thus have $\sigma_c \leq \sigma_a \leq \sigma_c+1$.\\*[-2mm]  
\item {\em Power series}:\; $\phi_n(z)=z^n$.\\*[-2mm]
\item {\em Neumann series}:\; $\phi_n(z) = J_n(z)$, the Bessel functions of the first kind.\\*[-2mm]
\item {\em Kapteyn series}:\; $\phi_n(z) = J_n(nz)$. They depend on 
 the proper height function\footnote{By the maximum principle, $\Omega$ cannot have any local maxima in $\C\setminus S$. For $x> 1$ we get 
 \[
 \textstyle\lim_{y\to 0^+} \partial_y \Omega(x+iy) = \sqrt{x^2-1}/x>0,
 \]
 so that, by symmetry, there are no local maxima on $S\setminus\{\pm 1\}$; also at $\pm 1$ are no local maxima.}
\[
\Omega(z):= \left|\frac{z \exp\sqrt{1-z^2}}{1+\sqrt{1-z^2}}\right| \qquad (z\in \C)
\]
which is defined by the principal branch of the square root and by continuity for $z \in S:=\{x\in \R : |x| \geq 1\}$; on $S$ both signs of the square root give the same value of $\Omega$, namely $\Omega|_S\equiv1$.
The corresponding sublevel-sets $D_r$ are visualized in Fig.~\ref{fig:Drho}. Note that $\{\Omega = 1\} = \partial D_1 \cup S$.
\end{itemize} 
We can now formulate and prove the main theorem of this paper.

\begin{figure}[tbp]
\includegraphics[width=0.48125\textwidth]{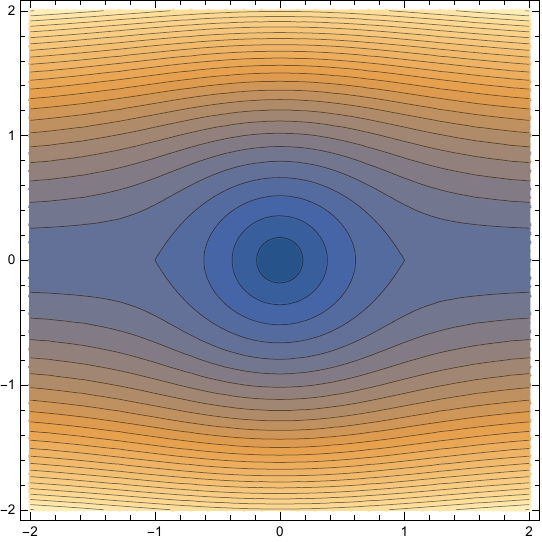}
\caption{Sublevel-sets $D_r$ of $\Omega(z)$, $r=\frac14,\frac12,\frac34,\ldots$\;}
\label{fig:Drho}
\end{figure}

\begin{theorem}\label{thm:jentzsch_series} For the four types of series at hand, Table~\ref{tab:1} lists the admissible growth exponents~$\lambda_n$, the proper height 
functions $\omega(z)$ and the convergence levels $\rho$ such that, if we assume $0<\rho<\infty$:
\begin{itemize}
\item the sections $f_n$ converge compactly in $D_\rho$ to a non-constant holomorphic $f$;\\*[-3mm]
\item the growth function satisfies
\begin{equation}\label{eq:thmgrowth}
\mu(z) = \max\left(1, \rho^{-1}\omega(z)\right)
\end{equation}
unless (a) $f_n(z)\to 0$ and $\omega(z)\leq \rho$ or (b), for the Kapteyn series, $z\in S$.
\end{itemize}
Hence, in all cases there is $\{\omega > \rho\} \subset \{\mu>1\} \cup \{\omega = 1\}$ and Theorem~\ref{thm:Jentzsch} gives that all points of the set $\partial D_\rho$ are cluster points of zeros of the $f_n$ and $D_\rho$ is the maximal open set in which $f_n$ converges pointwise. 
\end{theorem}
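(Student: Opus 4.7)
\emph{Strategy.} The plan is to reduce the theorem to verifying the growth-function formula~\eqref{eq:thmgrowth} in each of the four cases and then to invoke Theorem~\ref{thm:Jentzsch} uniformly. The uniform tool I would use is the Cahen identity quoted in the introduction: for a fixed $z\in\C$, I would view $f_n(z)$ as the $n$-th section of the numerical series $\sum_k a_k\phi_k(z)$ and pass to the auxiliary general Dirichlet series
\[
\sum_{k\geq 0} a_k\phi_k(z)\,e^{-\lambda_k w}\qquad(w\in\C)
\]
with convergence abscissa $\sigma_c(z)\in[-\infty,+\infty]$. Cahen's theorem then yields, outside the set where $\sum_k a_k\phi_k(z)=0$,
\[
\log\mu(z)\;=\;\limsup_{n\to\infty}\lambda_n^{-1}\log|f_n(z)|\;=\;\max\bigl(0,\sigma_c(z)\bigr),
\]
so the task reduces to identifying $e^{\sigma_c(z)}$ with $\rho^{-1}\omega(z)$ in each case.

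\emph{The four cases.} This identification rests on a pointwise asymptotic for the factor $\phi_n$. For general Dirichlet series the auxiliary series is $\sum_k a_k e^{-\lambda_k(z+w)}$, yielding $\sigma_c(z)=\sigma_c-\Re z$ and hence~\eqref{eq:thmgrowth} with $\omega(z)=e^{-\Re z}$, $\rho=e^{-\sigma_c}$. For power series the substitution $u=ze^{-w}$ turns the auxiliary series into $\sum_k a_k u^k$, whose Cauchy--Hadamard radius is $\rho$, so $e^{\sigma_c(z)}=|z|/\rho$ with $\omega(z)=|z|$. For Neumann series ($\lambda_n=n$) I would compute the radius of the power series $\sum_k a_k J_k(z) u^k$ in $u=e^{-w}$ from $J_n(z)\sim(z/2)^n/n!$ and Stirling: the characteristic Neumann growth $\limsup|a_n|^{1/n}/n=2/(e\rho)$ exactly cancels the factorial decay of $J_n$ and produces $|a_n J_n(z)|^{1/n}\to|z|/\rho$ on compacta, giving $e^{\sigma_c(z)}=|z|/\rho$ with $\omega(z)=|z|$. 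For Kapteyn series ($\lambda_n=n$) the decisive input is Debye's classical uniform asymptotic
\[
|J_n(nz)|^{1/n}\;\longrightarrow\;\Omega(z)
\]
on compact subsets of $\C\setminus S$, yielding $e^{\sigma_c(z)}=\Omega(z)/\rho$ with $\omega(z)=\Omega(z)$.

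\emph{Remaining verifications and conclusion.} I would next check admissibility of each $\lambda_n$ in the sense of Definition~\ref{def:h}, which follows from the same asymptotic bounds since $\max_{z\in K}|\phi_n(z)|^{1/\lambda_n}$ is controlled by $\max_K\omega$. Compact convergence of $(f_n)$ on $D_\rho=\{\omega<\rho\}$ to a non-constant holomorphic limit $f$ is classical for the first two types, and in the latter two follows by dominated comparison with a geometric series via the same asymptotics; non-constancy is generic. Once~\eqref{eq:thmgrowth} is in hand, the inclusion $\{\omega>\rho\}\subset\{\mu>1\}\cup\{\omega=1\}$ is immediate: exception~(b) (Kapteyn with $z\in S$) is absorbed by $\{\omega=1\}$, exception~(a) ($f_n(z)\to 0$ with $\omega(z)\leq\rho$) is disjoint from $\{\omega>\rho\}$, and in the remaining region~\eqref{eq:thmgrowth} forces $\mu(z)=\rho^{-1}\omega(z)>1$. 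Theorem~\ref{thm:Jentzsch} applied with $r=1$ then delivers the clustering and maximality statements simultaneously.

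\emph{Main obstacle.} The hardest step will be the Kapteyn case: it rests on the uniform version of Debye's steepest-descent asymptotic for $J_n(nz)$ on compact subsets of $\C\setminus S$---a classical but delicate saddle-point analysis---and on handling the coalescing-saddle locus $S$ separately as exception~(b). A secondary subtlety in the Neumann case is that the bare factor $|J_k(z)|^{1/k}$ tends to zero, so Cahen only produces a nontrivial $\sigma_c(z)$ once the characteristic growth of a Neumann series of radius $\rho$ is invoked to exactly offset the factorial decay of $J_n$.
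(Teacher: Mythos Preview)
Your overall strategy---Cahen's formula applied to the auxiliary Dirichlet series $\sum_k a_k\phi_k(z)e^{-\lambda_k w}$, then identifying $e^{\sigma_c(z)}=\rho^{-1}\omega(z)$ via the pointwise asymptotics of $\phi_n$---is exactly the route the paper takes, case by case. The reduction to Theorem~\ref{thm:Jentzsch} with $r=1$ is also how the paper concludes.

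There are, however, two genuine gaps. First, ``non-constancy is generic'' is not an argument: the theorem asserts non-constancy for \emph{every} series with $0<\rho<\infty$, and this must be proved. The paper's argument is short but essential: if $f$ were constant, then by uniqueness of the coefficient expansion (known for each of the four families) all but one $a_n$ would vanish, forcing $\rho=\infty$. Second, your admissibility check is too loose. Controlling $\max_K|\phi_n(z)|^{1/\lambda_n}$ by $\max_K\omega$ is a statement about individual terms, not about the sections $f_n=\sum_{k\le n}a_k\phi_k$; passing from one to the other requires genuine \emph{inequalities} valid for all $n$, not asymptotics. Concretely: for Neumann series the paper uses $|J_n(z)|\le |z/2|^n e^{|\Im z|}/n!$; for Kapteyn series it uses Kapteyn's inequality $|J_n(nz)|\le\Omega(z)^n$; and for general Dirichlet series the hypothesis $\sigma_a<\infty$ is needed precisely here, to dominate $\sum_{k\le n}|a_k|e^{-\lambda_k\Re z}$ on compacta. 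The same inequalities (rather than asymptotics) are what make your ``dominated comparison with a geometric series'' work for compact convergence in the Neumann and Kapteyn cases. Once you supply these ingredients, your sketch becomes the paper's proof.
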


\begin{table}[tbp]
\caption{Growth exponents, height functions, and convergence level $\rho$ of various series}
\label{tab:1}       
\begin{tabular}{lcccc}
\hline\noalign{\smallskip}
type of series & $\phi_n(z)$ & growth expnt. $\lambda_n$ & height $\omega(z)$ & $\rho^{-1}$ \\
\noalign{\smallskip}\hline\noalign{\smallskip}
general Dirichlet$^*$ & $\exp(-\lambda_n z)$ & $\lambda_n$ & $\exp(-\Re z)$ & $e^{\sigma_c}$ \\*[2mm]
power series  & $z^n$ & $n$ & $|z|$ & $\limsup_n |a_n|^{1/n}$ \\*[2mm]
Neumann & $J_n(z)$ & $n$ & $|z|$ & $\limsup_{n} \frac{e}{2n}|a_n|^{1/n}$ \\*[2mm]
Kapteyn & $J_n(nz)$ & $n$ & $\Omega(z)$  & $\limsup_n |a_n|^{1/n}$ \\
\noalign{\smallskip}\hline
\end{tabular}\\*[1mm]
\hspace*{1mm} $^*$ with abscissae $\sigma_c$ of convergence and $\sigma_a<\infty$ of absolute convergence
\end{table}

\begin{proof} The holomorphic limit $f$ in $D_\rho$ is non-constant. In fact, since $\phi_m(z)\equiv 1$ with $m=0$ in the last three cases and, by inserting $\lambda=0$ into the sequence of growth exponents if necessary for the case of the general Dirichlet series (which would not change $\sigma_c$) so that, say, $\lambda_m=0$, uniqueness\footnote{Cf. \cite[Thm.~6]{MR0185094} and \cite[§§16.11/17.4]{MR0010746}.} of the coefficients yields for constant $f$ that $a_n=f$ for $n=m$ and $a_n = 0$ otherwise. But then $\rho=\infty$. 

In each case we will prove Eq.~\eqref{eq:thmgrowth} for $\omega(z)$, admissibility of the growth exponents $\lambda_n$, and if not already referenced, compact convergence in $D_\rho$.

\smallskip

{\em 1. General Dirichlet series.}
By the theory of general Dirichlet series \cite[Chap.~II]{MR0185094}, there is a spectral abscissa $\sigma_c \in [-\infty,\infty]$ such that $\sum_{n=0}^\infty a_n e^{-\lambda_n s}$
converges compactly to some holomorphic $f$ in the half plane $\Re s > \sigma_c$ and diverges for all $\Re s < \sigma_c$. 
Cahen's formula \cite[Thm.~7; see also the footnote there]{MR0185094} states that
\[
\limsup_{n\to \infty} \frac{\log |f_n(0)| }{\lambda_n} = \max(0,\sigma_c)
\]
unless $f_n(0)\to 0$ (which can only happen if $\sigma_c\leq 0$).
Equivalently, we get 
\begin{equation}\label{eq:cahen}
\mu(0) = \limsup_{n\to \infty} |f_n(0)|^{1/\lambda_n} = \max\left(1,e^{\sigma_c}\right) =: \max\left(1,\rho^{-1}\right)
\end{equation}
unless $f_n(0)\to 0$ and $1\leq\rho$. By applying this result for a fixed $z \in \C$ to 
\[
\sum_{n=0}^\infty \left(a_n e^{-\lambda_n z}\right) e^{-\lambda_n s} = \sum_{n=0}^\infty a_n e^{-\lambda_n (z+s)} 
\]
we get with $\omega(z):=\exp(-\Re z)$
\[
\mu(z) = \limsup_{n\to \infty} |f_n(z)|^{1/\lambda_n} = \max\left(1,e^{\sigma_c - \Re z}\right) = \max\left(1,\rho^{-1} \omega(z)\right) 
\]
unless $f_n(z)\to 0$ and $\omega(z)\leq \rho$.

Note that $-\infty<\sigma_c<\infty$ since we assume $0<\rho<\infty$. Now,
admissibility of the growth exponents $\lambda_n$ follows from $\sigma_a < \infty$, where $\sigma_a$ is the convergence abscissa of the general Dirichlet series with the coefficients $a_k$ replaced by $|a_k|$; we denote the sections of this series by $g_n(z)$. For $K\Subset \C$ we have
\[
\max_{z\in K} |f_n(z)| \leq \sum_{k: \lambda_k < 0} |a_k| e^{-\lambda_k \Re z} + \sum_{k\leq n : \lambda_k \geq 0} |a_k| e^{-\lambda_k \Re z} \leq A + |g_n(T)|,
\]
where $A$ is the maximum over $K$ of the fixed first sum of finitely many continuous terms and $T:=\min_{z\in K} \Re z$. We thus get the explicit admissibility bound
\[
\limsup_{n\to\infty} \max_{z\in K} |f_n(z)|^{1/\lambda_n} \leq  1 + \lim_{n\to\infty} |g_n(T)|^{1/\lambda_n} \leq 1 + \max\left(1,e^{\sigma_a - T}\right).
\]

{\em 2. Power series.} By the theory of power series, the series $\sum_{n=0}^\infty a_n z^n$
converges compactly to some holomorphic $f$ in the disk $|z|<\rho$ and diverges for all $|z|>\rho$ where the {\em convergence radius} $\rho$ is given by $\rho^{-1} = \limsup_{n\to\infty} |a_n|^{1/n}$. Hence, for fixed $z$, the general Dirichlet series
\begin{equation}\label{eq:power_as_Dirichlet}
\sum_{n=0}^\infty \left(a_n z^n\right) e^{-n s} = \sum_{n=0}^\infty a_n \left(z e^{-s}\right)^n
\end{equation}
converges for $|z| e^{-\Re s} < \rho$ and diverges for $|z| e^{-\Re s} > \rho$, therefore has (wrt. the variable $s$) the convergence abscissa $\sigma_c$ given by $e^{\sigma_c} = |z|/\rho$. Denoting the sections of the power series by $f_n(z)$, Cahen's formula \eqref{eq:cahen} applied to \eqref{eq:power_as_Dirichlet} gives
\begin{equation}\label{eq:powerh}
\mu(z) = \limsup_{n\to \infty} |f_n(z)|^{1/n} = \max\left(1,e^{\sigma_c}\right) = \max\left(1,\rho^{-1}|z|\right) 
\end{equation}
unless $f_n(z)\to 0$ (which can only happen if $|z|\leq \rho$).

For $K\Subset \C$ with $R := \max_{z\in K} |z|$ we get the admissibility bound
\[
\limsup_{n\to\infty} \max_{z\in K}|f_n(z)|^{1/n} \leq \limsup_{n\to\infty}\left(\sum_{k=1}^n |a_k| \, R^k\right)^{1/n} = \max\left(1,\rho^{-1} R\right)
\]
since the power series with coefficients $|a_k|$ shares the same convergence radius.

\smallskip
{\em 3. Neumann series.} We consider the Neumann series \cite[Chap.~XVI]{MR0010746}
\[
\sum_{n=0}^\infty a_n J_n(z) 
\]
and denote its sections by $f_n(z)$. Horn's large order asymptotics of the Bessel functions \cite[§8.1]{MR0010746} gives, for $z$ fixed,
\[
J_n(z) \sim \frac{1}{\sqrt{2\pi n}} \left(\frac{e z}{2n}\right)^n \qquad (n\to \infty),
\]
so that the auxiliary power series in $w$ that is given by
\[
\sum_{n=0}^\infty a_n J_n(z) w^n
\]
has the reciprocal convergence radius $\rho^{-1} |z|$ with $\rho^{-1}:=\limsup \frac{e}{2n}|a_n|^{1/n}$. Application of \eqref{eq:powerh} to the case $w=1$ yields
\[
\mu(z) = \limsup_{n\to \infty} |f_n(z)|^{1/n} = \max\left(1,\rho^{-1}|z|\right)
\]
unless $f_n(z)\to 0$, which can only happen if $1=w \leq \rho/|z|$, that is, $|z|\leq \rho$.

To prove the admissibility of the growth exponents $\lambda_n=n$ we refer to the inequality \cite[§3.31]{MR0010746}
\[
|J_n(z)| \leq \frac{|z/2|^n}{n!} e^{|\!\Im z|}  \qquad (z\in \C, n\in \N_0).
\]
This gives, for $K\Subset \C$ with $R := \max_{z\in K} |z|$, using our results for power series, the admissibility bound
\[
\limsup_{n\to\infty} \max_{z\in K}|f_n(z)|^{1/n} \leq \limsup_{n\to\infty}\left(\sum_{k=1}^n \frac{|a_k|}{2^k k!} R^k\right)^{1/n} \leq \max\left(1,\rho^{-1} R\right)
\]
since the power series with coefficients $|a_k|/(2^k k!)$ has, by Stirling's formula, once again the reciprocal convergence radius $\rho^{-1}$. The same estimates yield the compact convergence of the series in $D_\rho$: for $z \in K\Subset D_\rho$ we have $R< \rho$ so that 
\[
\sum_{n=0}^\infty |a_n| |J_n(z)| \leq \sum_{n=0}^\infty \frac{|a_n|}{2^n n!} R^n < \infty.
\]

\smallskip
{\em 4. Kapteyn series.} We consider the Kapteyn series \cite[Chap.~XVII]{MR0010746}
\begin{equation}\label{eq:kapteyn_series}
\sum_{n=0}^\infty a_n J_n(n z) 
\end{equation}
and denote its sections by $f_n(z)$. The Carlini--Meissel asymptotics\footnote{At least rigorously, this asymptotics is only stated for real $-1 < z < 1$ in the literature on Bessel functions. It easily extends, however, to complex $z\not\in S$ by using Olver's uniform large order asymptotics \cite[§11.10.4]{MR0435697} that gives, with $\delta>0$ fixed, uniformly for $|\arg z|<\pi-\delta$
\[
J_n(nz) \sim \left(\frac{4\zeta}{1-z^2}\right)^{1/4}\frac{\Ai(n^{2/3}\zeta)}{n^{1/3}} \qquad (n\to\infty).
\]
Here $\zeta$ is analytic in the complex $z$-plane cut at the negative reals such that $\zeta(1)=0$ and 
\[
w:=\frac{2}{3}\zeta^{3/2} = \log\frac{1+\sqrt{1-z^2}}{z} - \sqrt{1-z^2}\qquad (0<z<1).
\]
Observing that for $z\not\in S$ in the cut plane also $\zeta$ belongs to that cut plane, we can combine this result
with Copson's uniform large argument asymptotics of the Airy function \cite[§4.4.1]{MR0435697},
\[
\Ai(n^{2/3}\zeta) \sim \frac{e^{-n w}}{2\pi^{1/2}n^{1/6} \zeta^{1/4}} \qquad (n\to\infty,\; |\arg\zeta| < \pi-\delta),
\]
to get the assertion $J_n(nz)\sim e^{-nw}/(\sqrt{2\pi n}\,(1-z^2)^{1/4})$ as $n\to\infty$ for fixed $z\not\in S$ in the cut plane. Now, the general case $z\not\in S$ follows from the symmetry of the Bessel functions.} of Bessel functions \cite[§8.11]{MR0010746} gives, for $z\not\in S$ fixed,
\[
J_n(nz) \sim \frac{1}{\sqrt{2\pi n}\,(1-z^2)^{1/4}} \left(\frac{z \exp\sqrt{1-z^2}}{1+\sqrt{1-z^2}}\right)^n \qquad (n\to \infty),
\]
taking the principal branch of the root functions. This yields, for fixed $z\not\in S$,
\[
|J_n(nz)|^{1/n} \sim \Omega(z)\qquad (n\to\infty);
\]
hence the auxiliary power series in $w$ that is given by
\[
\sum_{n=0}^\infty a_n J_n(nz) w^n,
\]
has then the reciprocal convergence radius $\rho^{-1} \Omega(z)$ with $\rho^{-1}:=\limsup |a_n|^{1/n}$. Application of \eqref{eq:powerh} to the case $w=1$ yields, for $z \not\in S$,
\[
\mu(z) = \limsup_{n\to \infty} |f_n(z)|^{1/n} = \max\left(1,\rho^{-1}\Omega(z)\right)
\]
unless $f_n(z)\to 0$, which can only happen if $1=w \leq \rho/\Omega(z)$, that is, $\Omega(z)\leq \rho$. 


To prove the admissibility of the growth exponents $\lambda_n=n$ for $0<\rho<\infty$ we refer to Kapteyn's inequality \cite[§8.7]{MR0010746}
\begin{equation}\label{eq:kapteyn}
|J_n(n z)| \leq \Omega(z)^n  \qquad (z\in \C, n\in \N_0).
\end{equation}
This gives, for $K\Subset \C$ with $R := \max_{z\in K} \Omega(z)$, using our results for power series, the admissibility bound
\[
\limsup_{n\to\infty} \max_{z\in K}|f_n(z)|^{1/n} \leq \limsup_{n\to\infty}\left(\sum_{k=1}^n |a_k|R^k\right)^{1/n} = \max\left(1,\rho^{-1} R\right).
\]
The same estimates yield the compact convergence of the series in $D_\rho$: for $z \in K\Subset D_\rho$
we have $R< \rho$ so that 
\[
\sum_{n=0}^\infty |a_n| |J_n(n z)| \leq \sum_{n=0}^\infty |a_n| R^n < \infty.
\]
~\vspace*{-10mm}\\~\qed
\end{proof}

\bigskip

\begin{remark} If we denote by $\rho_a$ the convergence level of the series of the same type with the coefficients $a_k$ replaced by $|a_k|$ and assume $0<\rho_a < \infty$, then the proof shows in all four cases the explicit admissibility bound
\[
\limsup_{n\to\infty} \max_{z\in K} |f_n(z)|^{1/\lambda_n} \leq \eta + \max\left(1, \rho_a^{-1} R\right)
\]
for $K\Subset \C$ with $R:=\max_{z\in K} \omega(z)$. Here $\eta=1$ if the sequence of growth exponents starts with some negative entries and $\eta=0$ otherwise. 
\end{remark}

\begin{remark}
 In all four cases, stronger than the statement that $D_\rho$ is the maximal open set of pointwise convergence, the series diverges if $\omega(z)>\rho$ unless, for Kapteyn series, $z\in S$. This  is well-known for general Dirichlet and power series and was, in fact, used in the above proof; but it also follows for Neumann and Kapteyn series by looking at the auxiliary power series for $w=1$. 
 
For Kapteyn series, the set $S$ can be exceptional, indeed: e.g., 
\[
\sum_{n=1}^\infty n^{-2} J_n(nz)
\]
has convergence level $\rho=1$ and thus diverges in $\{\Omega > 1\}\setminus S$. However, Kapteyn's inequality \eqref{eq:kapteyn} yields the uniform convergence on all of $\{\Omega \leq 1\} \supset S$.
\end{remark}

\section{Zeros of sections of series of holonomic entire functions}\label{sec:entire}

As observed in Figs.~\ref{fig:kepler}--\ref{fig:kapteyn}, sections of Kapteyn series with $\rho\leq 1$ exhibit zeros that cannot be accounted for by the Jentzch-type Theorem~\ref{thm:jentzsch_series}; the same observation applies to the Neumann series in Figs.~\ref{fig:neumann}. Instead, we will predict the appearance and the density of these zeros by the theory of entire functions.

\subsection{Some results from the theory of entire functions}

We restrict ourselves to series of the form \eqref{eq:series} where the $\phi_n(z)$ are {\em holonomic} entire functions (also called {\em D-finite} entire functions), i.e., entire functions that are solutions of homogeneous linear differential equations with polynomial coefficients. This class of functions forms a ring that is closed under differentiation, indefinite integration and rescaling $\phi(\lambda z)$, see \cite[Thm.~7.2]{MR2768529}; hence, the finite sections $f_n(z)$ defined in \eqref{eq:section} are in this class, too. Considering their differential equations, the exponential function $\exp(z)$ and the Bessel functions $J_n(z)$ of integer order of the first kind are obvious examples of functions in this class.  

Holonomic entire functions $f(z)$ are entire functions of {\em completely regular growth}, cf. \cite[p.~747]{MR1401944}: i.e., they are of finite order $0 < \sigma<\infty$ and normal type such that
\begin{equation}\label{eq:indicator}
h_f(\theta) = \lim_{r\to\infty:r\not\in E_0} \frac{\log|f(r e^{i\theta})|}{r^\sigma}
\end{equation}
exists uniformly in $\theta$. Here the exceptional set $E_0$ has relative linear density zero, it is obviously related to the zeros of $f$. This {\em Phragmén--Lindelöf indicator function} $h(\theta)$ is $2\pi$-periodic, continuous and has derivatives $h_f'(\theta-0)$ from the left and $ h_f'(\theta+0) \geq h_f'(\theta-0)$ from the right that differ at most on a countable set; cf. \cite[§I.15]{MR589888}.

The relation of $h_f$ to the distribution of zeros of an entire function $f$ of completely regular growth is given by
the formula \cite[Thm.~III.3]{MR589888}
\begin{equation}\label{eq:zeros}
\lim_{r\to\infty} \frac{N_f(r;\alpha,\beta)}{r^\sigma} = \frac{1}{2\pi\sigma}\left(h_f'(\beta)-h_f'(\alpha) + \sigma^2\int_\alpha^\beta h_f(t)\,dt\right),
\end{equation}
where $N_f(r;\alpha,\beta)$ denotes the number of zeros of $f$ in the finite sector $|z|<r$ and $\alpha < \arg(z)< \beta$. Here, $\alpha$ and $\beta$ have to be points of differentiability of $h_f$.

We observe that the indicator function of a sum $f+g$ of two entire functions of completely regular growth of the same order $\sigma$ satisfies \cite[p.~52]{MR589888}
\begin{equation}\label{eq:sumindicator}
h_{f+g} (\theta) \leq \max\left(h_f(\theta),h_g(\theta)\right)
\end{equation}
with equality if $h_f(\theta)\neq h_g(\theta)$; of course $f+g$ has order $\sigma$ then, too.

\smallskip

We will need the following examples of indicator functions:
\begin{itemize}
\item {\em Exponential function.} $\exp((a- ib) z)$, with real $a,b$, has order $\sigma=1$ and the indicator function is \cite[p.~52]{MR589888}
\begin{equation}\label{eq:expind}
h(\theta) =a\cos(\theta) + b\sin(\theta),
\end{equation}
Consistent with the fact that the exponential function has {\em no} zeros, the density of zeros given by \eqref{eq:zeros} is zero for all sectors $\alpha < \arg(z)< \beta$.\\*[-2mm]

\item {\em Cosine function.} $\cos(z) = (e^{iz}+e^{-iz})/2$, using \eqref{eq:sumindicator} with the remark on equality, has
\[
h(\theta) = \max\big(-\sin(\theta),\sin(\theta)\big) = |\!\sin\theta|,
\]
since these values differ if $\sin\theta \neq 0$; the case $\sin\theta=0$ follows from continuity. Now, the counting formula \eqref{eq:zeros} 
gives, for $\delta >0$ small enough,
\begin{equation}\label{eq:coszerodensity}
\lim_{r\to\infty} \frac{N(r;\theta-\delta,\theta+\delta)}{r} = \begin{cases}
\dfrac{1}{\pi} & \quad \text{if $\theta \in \pi\Z$,}\\*[4mm]
0&  \quad \text{otherwise};
\end{cases}
\end{equation}
which is consistent with all zeros being located on the real axis at $\pi (\Z+\frac{1}{2})$.\\*[-2mm]

\item {\em Bessel function $J_n(z)$.} From Hankel's asymptotics \cite[§4.9.3]{MR0435697}, which we write here in the form
\[
J_n(z) \sim \sqrt{\frac{2}{\pi z}}\left(e^{i\left(z-\frac{n\pi}{2}-\frac\pi4\right)}\big(\tfrac{1}{2}+ O(z^{-1})\big)+e^{-i\left(z-\frac{n\pi}{2}-\frac\pi4\right)}\big(\tfrac{1}{2}+ O(z^{-1})\big)\right),
\]
uniformly as $z\to \infty$ in the sector $|\arg z|\leq \pi-\delta$ for fixed $\delta > 0$, it thus follows in the same way as for $\cos(z)$ in the bullet point above that $J_n(z)$ has order $\sigma=1$, the indicator function
\begin{equation}\label{eq:Besselind}
h(\theta) = |\!\sin\theta|,
\end{equation}
and the linear density of zeros given by \eqref{eq:coszerodensity}.
\end{itemize}

\subsection{Application to general Dirichlet, Kapteyn and Neumann series}

If the convergence level $\rho$ of one of the series in Sect.~\ref{sec:series} is finite, then infinitely many of the coefficients $a_n$ must be non-zero. So let us assume this case, let $n_0$ be the first index with $a_{n_0}\neq 0$ and let $(a_{n'})_{n'}$ be the largest subsequence of non-zero coefficients.

\begin{itemize}
\item {\em General Dirichlet series.}
Here, using \eqref{eq:expind} and \eqref{eq:sumindicator} with the remark on equality, the section $f_{n'}$ of a general Dirichlet series \eqref{eq:genDir} has order $\sigma=1$ and its indicator function is, for $\theta \in [0,2\pi]$,
\[
h_{f_{n'}}(\theta) = \max\left(-\lambda_{n_0} \cos(\theta), -\lambda_{n'}\cos(\theta)\right)
=\begin{cases}
-\lambda_{n'} \cos(\theta) &\text{if $\frac{\pi}{2} \leq \theta \leq \frac{3\pi}{2}$}\\*[2mm]
-\lambda_{n_0} \cos(\theta)&\text{otherwise}.
\end{cases}
\]
Hence, the counting formula \eqref{eq:zeros} 
gives, for $\delta >0$ small enough,
\[
\lim_{r\to\infty} \frac{N_{f_{n'}}(r;\theta-\delta,\theta+\delta)}{r} = \begin{cases}
\dfrac{\lambda_{n'}-\lambda_{n_0}}{2\pi} & \quad \text{if $\theta \in \pi(\Z+\frac{1}{2})$,}\\*[4mm]
0&  \quad \text{otherwise}.
\end{cases}
\]
\begin{remark} As an example, the sections $f_n(s)=\sum_{k=1}^n k^{-s}$ of Riemann's zeta function then satisfy
\[
\lim_{r\to\infty} \frac{N_{f_{n}}(r;\theta-\delta,\theta+\delta)}{r} = \begin{cases}
\dfrac{\log n}{2\pi} & \quad \text{if $\theta \in \pi(\Z+\frac{1}{2})$,}\\*[4mm]
0&  \quad \text{otherwise}.
\end{cases}
\]
\end{remark}
Though this result is less precise than Theorem~\ref{thm:jentzsch_series} as it does not imply the clustering of these infinitely many zeros at the axis $\Re z = \sigma_c$ as $n'\to \infty$, it gives the asymptotic density of zeros of individual sections $f_{n'}$ in arbitrary small sectors that contain the positive or negative imaginary axis.\\*[-2mm]
\item {\em Kapteyn series.} Here, using \eqref{eq:sumindicator} with the remark on equality and \eqref{eq:Besselind}, which rescales to the indicator function $h(\theta) = n |\!\sin\theta|$
for the function $J_n(nz)$ with $n\in\N$, the section $f_{n'}$ of a Kapteyn series \eqref{eq:kapteyn_series} has order $\sigma=1$ and its indicator function is
\[
h_{f_{n'}}(\theta) = n' |\!\sin\theta|.
\]
Hence, the counting formula \eqref{eq:zeros} 
gives, for $\delta >0$ small enough,
\[
\lim_{r\to\infty} \frac{N_{f_{n'}}(r;\theta-\delta,\theta+\delta)}{r} = \begin{cases}
\dfrac{n'}{\pi} & \quad \text{if $\theta \in \pi\Z$,}\\*[4mm]
0&  \quad \text{otherwise}.
\end{cases}
\]
Thus we get a rather precise knowledge about the density of zeros of individual sections $f_{n'}$ in arbitrary small sectors that contain the positive or negative real axis, namely a linear density of $n'/\pi$, which adds to the quantitative understanding of Figs.~\ref{fig:kepler}--\ref{fig:kapteyn}. In fact, by this formula we would expect about $4\times 25/\pi \approx 32$ zeros in those figures, where we actually observe a zero count of about $35$.\\*[-2mm]
\item {\em Neumann series.} Since all the non-zero terms of a Neumann series \eqref{eq:neumann} share the same order $\sigma=1$ and indicator function
$h(\theta)=|\!\sin\theta|$, \eqref{eq:sumindicator} just gives the estimate
\begin{equation}\label{eq:indneumann}
h_{f_{n'}}(\theta) \leq |\!\sin\theta|.
\end{equation}
However, generically there should be no cancellation in the limit \eqref{eq:indicator} defining the indicator function, that is, the term with the largest coefficient in absolute value gives the dominant contribution to $\log |f_{n'}(re^{i\theta})|$ and we expect equality in \eqref{eq:indneumann}. The density of zeros would then be given by \eqref{eq:coszerodensity}. This is consistent with what we observe in Fig.~\ref{fig:neumann}: infinitely many zeros with linear density close to the real axis start popping up in that example at about $|\Re (z)| \approx 30$ and in the range $30 < \Re(z)<40$ we see, indeed, about $10/\pi \approx 3$ zeros.
\end{itemize}


%
%

\begin{acknowledgements}
The author would like to thank (a) Christian Ludwig for suggesting the visualization of the two series solutions of Kepler's equation in Fig.~\ref{fig:kepler} that has started this study; and (b) Elias Wegert for commenting on a first draft of this paper.
\end{acknowledgements}

%
%

\bibliographystyle{spmpsci}      
\bibliography{paper.bib}   

\begin{thebibliography}{10}
\providecommand{\url}[1]{{#1}}
\providecommand{\urlprefix}{URL }
\expandafter\ifx\csname urlstyle\endcsname\relax
  \providecommand{\doi}[1]{DOI~\discretionary{}{}{}#1}\else
  \providecommand{\doi}{DOI~\discretionary{}{}{}\begingroup
  \urlstyle{rm}\Url}\fi

\bibitem{MR0060009}
Carath\'{e}odory, C.: Theory of functions of a complex variable. {V}ol. 1.
\newblock Chelsea Publ. Co., New York, N. Y. (1954)

\bibitem{MR1268639}
Colwell, P.: Solving {K}epler's equation over three centuries.
\newblock Willmann-Bell, Richmond, VA (1993)

\bibitem{MR62826}
Ganelius, T.: Sequences of analytic functions and their zeros.
\newblock Ark. Mat. \textbf{3}, 1--50 (1954)

\bibitem{MR0185094}
Hardy, G.H., Riesz, M.: The general theory of {D}irichlet's series.
\newblock Cambridge University Press, Cambridge (1915)

\bibitem{MR1555151}
Jentzsch, R.: Untersuchungen zur {T}heorie der {F}olgen analytischer
  {F}unktionen.
\newblock Acta Math. \textbf{41}, 219--251 (1916)

\bibitem{MR2768529}
Kauers, M., Paule, P.: The concrete tetrahedron: symbolic sums, recurrence
  equations, generating functions, asymptotic estimates.
\newblock Springer, Wien (2011)

\bibitem{MR589888}
Levin, B.J.: Distribution of zeros of entire functions, revised edn.
\newblock American Mathematical Society, Providence, R.I. (1980)

\bibitem{MR2419473}
Luh, W.: A {J}entzsch-type-theorem.
\newblock Comput. Methods Funct. Theory \textbf{8}, 199--202 (2008)

\bibitem{MR1401944}
M\"{u}ller, J.: Accelerated polynomial approximation of finite order entire
  functions by growth reduction.
\newblock Math. Comp. \textbf{66}, 743--761 (1997)

\bibitem{MR0435697}
Olver, F.W.J.: Asymptotics and special functions.
\newblock Academic Press, New York (1974)

\bibitem{MR27305}
Tur\'{a}n, P.: On some approximative {D}irichlet-polynomials in the theory of
  the zeta-function of {R}iemann.
\newblock Danske Vid. Selsk. Mat.-Fys. Medd. \textbf{24}, 36 (1948)

\bibitem{MR0010746}
Watson, G.N.: A {T}reatise on the {T}heory of {B}essel {F}unctions, 2nd edn.
\newblock Cambridge University Press, Cambridge (1944)

\bibitem{MR3024399}
Wegert, E.: Visual complex functions.
\newblock Birkh\"{a}user, Basel (2012)

\end{thebibliography}

\end{document}